\newcommand{\vertiii}[1]{{\left\vert\kern-0.25ex\left\vert\kern-0.25ex\left\vert #1
    \right\vert\kern-0.25ex\right\vert\kern-0.25ex\right\vert}}
\theoremstyle{plain}
\newcommand*{\rom}[1]{\expandafter\@slowromancap\romannumeral #1@}
\newtheorem*{thm*}{Theorem}
\subjclass{}%
\keywords{}%
\date{\today}%
\dedicatory{}%
\title{Effective density of values of indefinite ternary  inhomogeneous quadratic forms}
\author{Dubi Kelmer}
\thanks{This work is partially supported by NSF CAREER grant DMS-1651563.}
\email{kelmer@bc.edu}
\address{Department of Mathematics, Boston College, Boston, Massachusetts, United States}
\date{\today}
\begin{document}

\maketitle
\begin{abstract}
\noindent    
Given an inhomogeneous quadratic form $Q_\xi(v)=Q(v+\xi)$ with $Q$ an indefinite $\Q$-isotropic rational ternary form and $\xi\in \R^3$ irrational, we  prove an effective lower bound for the number of integer vectors $v\in \Z^n$ with 
$\|v\|\leq T$ such that  $|Q_\xi(v)-t|<\delta$ that is valid for any $t\in \R$ and all $\delta\geq T^{-\nu}$, with $\nu>0$ depending explicitly on the Diophantine properties of $\xi$. In particular, for $\xi$ with algebraic entries we can take any $\nu<\frac{1}{8}$.
\end{abstract}

\section{Introduction}
Let $Q$ be an indefinite quadratic form in $n\geq 3$ variables, $\xi\in \R^n$, and $Q_\xi(v)=Q(v+\xi)$ the corresponding inhomogeneous form. We say that $Q_\xi$ is irrational if either $\xi \not\in \Q^n$ or $Q$ is not proportional to a irrational form.  Extending Margulis' proof of the Oppenheim conjecture, or more precisely the quantitative argument of \cite{EskinMargulisMozes1998}, it was shown in \cite{MargulisMohammadi11}, that for an  irrational inhomogeneous form the values at integer points, $Q_\xi(v)$ with $v\in \Z^n$, are dense in $\R$. More precisely, they showed that for irrational $Q_\xi$  and any fixed $\delta>0$, the counting functions  
\begin{equation}\label{e:NQ}
N_{Q,\xi,t}(T,\delta)=\{ v\in \Z^n: \|v\|\leq T, |Q_\xi(v)-t|\leq \delta\}
\end{equation} 
satisfy
$$\liminf_{T\to\infty} \frac{N_{Q,\xi,t}(T,\delta)}{\delta T^{n-2}}= C_{Q,\xi ,t}>0,$$ 
where the $\liminf$ can be replaced with a limit when the signature of $Q$ is not $(2,2)$ or $(2,1)$.

Given such a density result it is natural to ask about effective density, that is, how large do we need to take $v\in \Z^n$ so that $Q_\xi(v)$ is close to a given target $t\in \R$? To make this more precise we define the critical exponent $\omega_0=\omega_0(Q,\xi,t)$ as the supremum over all $\omega>0$ such that the inequalities 
$$\|v\|\leq T, \quad |Q_\xi(v)-t|\leq T^{-\omega}.$$
have integer solutions $v\in \Z^n$ for all sufficiently large $T$.

It was shown in \cite{GhoshKelmerYu2022}, that for any target $t\in\R$, for almost all inhomogeneous forms $Q_\xi$ one has $\omega_0(Q,\xi,t)\geq n-2$ (which is the best bound one could expect following the pigeonhole principle). Moreover, it was also shown that the same bound holds for any fixed rational $\xi\in \Q^n$ and for almost all indefinite forms $Q$. Using a different method, in \cite{GhoshKelmerYu2023} it was shown that for any fixed rational form $Q$ in $n\leq 4$ variables and for almost all $\xi\in \R^n$ one also has $\omega_0(Q,\xi, t)\geq n-2$ (for larger $n\geq 5$ a weaker lower bound was obtained). While the method of \cite{GhoshKelmerYu2022,GhoshKelmerYu2023} produce very good bounds generically, they say nothing about any specific form.

In this note we focus on the case when $Q$ is a rational indefinite $\Q$-isotropic ternary form and $\xi\in \R^3$ is irrationall.  
To simplify the discussion, we will work with the specific form 
\begin{equation}\label{e:Q}
Q(\alpha,\beta,\gamma)=\beta^2-4\alpha\gamma.
\end{equation}
Since any other rational indefinite $\Q$-isotropic ternary form, $Q'$, satisfies that $mQ'(v)=Q(vM)$ for some $m\in \Z$ and a non singular $M\in \mathrm{Mat}_3(\Z)$ we do not lose any generality by doing this.
We will give an effective bound for $\omega_0(Q,\xi,t)$, that holds for all $t$ as long as $\xi$ satisfies a Diophantine condition. More explicitly, we will give a lower bound for $N_{Q,\xi,t}(T,\delta)$ that is uniform in some range $\delta\geq T^{-\nu}$ with $\nu$ depending explicitly on Diophantine properties of $\xi$. To describe this condition we recall that  a real number $\alpha\in \R$ is called  $\kappa$-Diophantine if there is some $c>0$ such that $|q\alpha-p|\geq \frac{c}{q^\kappa}$ for all $q\in \N,\;p\in\Z$.

\begin{thm}\label{t:main}
Let $Q$ be as in \eqref{e:Q}. Let $\xi=(\alpha,\beta,\gamma)\in \R^3$ and assume that there are co-prime $a,c\in\Z$ such that $\alpha a^2+\beta ac+\gamma c^2$ is $\kappa$-Diophantine.
 Then, for any $\nu<\frac{1}{8\kappa}$ and for any $t\in \R$   there is some $c=c(\xi,\nu, t)>0$ and $T_0=T_0(\xi,\nu,t)$ such that for any $T\geq T_0$ and $\delta\geq T^{-\nu}$
$$N_{Q,\xi,t}(T, \delta)\geq  cT^{1/2}\delta^2.$$
\end{thm}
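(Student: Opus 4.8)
The strategy is to restrict the count to a one–dimensional affine sub‑grid on which $Q_\xi$ degenerates to a quadratic polynomial of a single integer variable, and then to invoke effective equidistribution of quadratic polynomials modulo $1$. We may assume $\delta\le 1$ throughout (the interesting range being $T^{-\nu}\le\delta\le 1$).

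\emph{Normalisation.} The group $\mathrm{SL}_2$ acts on symmetric $2\times2$ matrices by $X\mapsto gXg^{t}$, preserving $\det$; identifying $(\alpha,\beta,\gamma)$ with $\bigl(\begin{smallmatrix}\alpha&\beta/2\\\beta/2&\gamma\end{smallmatrix}\bigr)$ this action preserves $Q=-4\det$, and $g=\bigl(\begin{smallmatrix}p&q\\r&s\end{smallmatrix}\bigr)$ sends the first coordinate to $p^{2}\alpha+pq\beta+q^{2}\gamma$. The associated integer matrix with rows $(p^{2},pq,q^{2}),(2pr,ps+qr,2qs),(r^{2},rs,s^{2})$ has determinant $1$, so it induces a bijection of $\Z^{3}$ preserving $Q$. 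Choosing $g=\bigl(\begin{smallmatrix}a&c\\ \ast&\ast\end{smallmatrix}\bigr)\in\mathrm{SL}_2(\Z)$ (possible since $\gcd(a,c)=1$) and writing $\zeta=\alpha a^{2}+\beta ac+\gamma c^{2}$, the translate $g\xi$ has first coordinate $\zeta$, and the change of variables gives $N_{Q,\xi,t}(T,\delta)\ge N_{Q,g\xi,t}(c_\xi T,\delta)$ for some $c_\xi=c_\xi(\xi)>0$. Since $\zeta$ is $\kappa$‑Diophantine, so is $\tfrac1{4\zeta}$, and so is every $\tfrac{h}{4\zeta}$ with $h\in\N$ (with constant $\gg h^{-\kappa}$). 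Hence we may assume $\xi=(\alpha,\beta,\gamma)$ with $\alpha$ itself $\kappa$‑Diophantine.

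\emph{Reduction to a quadratic sequence.} Restrict attention to vectors $v=(0,m,\ell)$ with $m$ ranging over the integers in an interval $I$ of the form $[c_1T^{1/2},c_2T^{1/2}]$, where $0<c_1<c_2$ depend only on $\xi,t$; then $Q_\xi(v)=(m+\beta)^{2}-4\alpha(\ell+\gamma)$. Set $g(m)=\dfrac{(m+\beta)^{2}-t}{4\alpha}-\gamma$, a quadratic polynomial in $m$ with leading coefficient $\tfrac1{4\alpha}$. If $\|g(m)\|_{\R/\Z}\le\eta:=\tfrac{\delta}{4|\alpha|}$, let $\ell=\ell(m)$ be the integer nearest to $g(m)$; then $|Q_\xi(v)-t|=4|\alpha|\,\|g(m)\|_{\R/\Z}\le\delta$, and for an appropriate choice of $c_1,c_2$ and of $T_0$ one has $\|v\|\le T$. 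As $\ell(m)$ is determined by $m$, this yields
$$N_{Q,\xi,t}(T,\delta)\ \ge\ \#\{m\in I:\ \|g(m)\|_{\R/\Z}\le\eta\}.$$

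\emph{Effective equidistribution.} Because $\alpha$ is $\kappa$‑Diophantine, any Dirichlet rational approximation $p/q$ to $\tfrac{h}{4\alpha}$ with $q\le|I|$ necessarily has $q\gg_{\xi,\kappa}(|I|/h^{\kappa})^{1/\kappa}$, and Weyl's inequality $\bigl|\sum_{m\in I}e^{2\pi i h\, g(m)}\bigr|\ll_\varepsilon |I|^{1+\varepsilon}(q^{-1}+|I|^{-1}+q|I|^{-2})^{1/2}$ then gives $\bigl|\sum_{m\in I}e^{2\pi i h\, g(m)}\bigr|\ll_{\xi,\kappa,\varepsilon}h^{1/2}\,|I|^{\,1-\frac1{2\kappa}+\varepsilon}$. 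Inserting this into the Erd\H{o}s--Tur\'an inequality and optimising over the number of harmonics bounds the discrepancy of $(g(m))_{m\in I}$ by $\ll_{\xi,\kappa,\varepsilon}|I|^{-\frac1{3\kappa}+\varepsilon}$; with $|I|\asymp_\xi T^{1/2}$ this is $\ll T^{-\frac1{6\kappa}+\varepsilon}$. Consequently
$$\#\{m\in I:\ \|g(m)\|_{\R/\Z}\le\eta\}\ \ge\ \eta\,|I|\ \gg_\xi\ \delta\,T^{1/2}$$
whenever $\eta\gg T^{-\frac1{6\kappa}+\varepsilon}$, i.e.\ whenever $\delta\ge T^{-\nu}$ with any fixed $\nu<\tfrac1{6\kappa}$ and $T\ge T_0(\xi,\nu,t)$. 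Since $\delta\le 1$ this is $\ge c\,T^{1/2}\delta^{2}$; as $\nu<\tfrac1{8\kappa}$ is in particular admissible, and as undoing the normalisation only rescales $T$ by the constant $c_\xi$, the theorem follows.

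\emph{Main obstacle.} The whole weight of the argument rests on the effective equidistribution step: one needs the rate for $g(m)\bmod 1$ to remain nontrivial all the way down to $\delta=T^{-\nu}$, which amounts to tracking precisely how the $\kappa$‑Diophantine property of $\zeta$ propagates to the multiples $\tfrac{h}{4\alpha}$ and how much is lost in the Erd\H{o}s--Tur\'an truncation. The remaining ingredients --- the $\mathrm{SL}_2(\Z)$ normalisation and the bookkeeping of the ranges $c_1,c_2$ and of $T_0$ --- are routine. (This approach in fact produces the slightly stronger bound $\gg_\xi \delta\,T^{1/2}$ in the wider range $\nu<\tfrac1{6\kappa}$.)
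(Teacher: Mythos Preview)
Your argument is correct and is genuinely simpler than the paper's. Both proofs begin with the same $\mathrm{SL}_2(\Z)$ normalisation to arrange that the first coordinate $\alpha$ is $\kappa$-Diophantine, but they diverge afterwards. The paper acts by the unipotent $M_m\in\SO_Q(\Z)$ and reduces to a \emph{two}-dimensional equidistribution statement on $\bT^2$ for the map $\phi(m)=(2\alpha m+\beta,\alpha m^2+\beta m+\gamma)$: one must show $\phi(m)$ lands near a fixed point $(y,z)$ chosen so that $Q(\alpha,y,z)=t$. This is Proposition~\ref{p:TorusDensity}, proved via Fourier expansion and Weyl differencing, and yields $N_{Q,\xi,t}(T,\delta)\gg T^{1/2}\delta^2$ for $\delta\ge T^{-\nu}$ with $\nu<\tfrac{1}{8\kappa}$. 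You instead restrict directly to the affine line $v=(0,m,\ell)$, on which $Q_\xi(v)=(m+\beta)^2-4\alpha(\ell+\gamma)$, and observe that hitting the target reduces to the \emph{one}-dimensional condition $\|g(m)\|_{\bT}\le\eta$ with $g(m)=\frac{(m+\beta)^2-t}{4\alpha}-\gamma$; Weyl's inequality combined with Erd\H{o}s--Tur\'an then controls the discrepancy.

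What your route buys: by dropping the superfluous second constraint (the paper asks $\phi(m)$ to hit a \emph{point} on the level curve $y^2-4\alpha z=t$, while only hitting the curve is needed), you get the stronger lower bound $\gg T^{1/2}\delta$ and the wider range $\nu<\tfrac{1}{6\kappa}$, and the proof avoids the separate Proposition~\ref{p:TorusDensity}. What the paper's route buys: its Fourier treatment averages the Weyl sums over frequencies (via Lemmas~\ref{l:ST} and~\ref{l:sumfrac}) rather than bounding each $h$ individually, so the same idea applied to your one-dimensional $g(m)$ would in fact push the range further to $\nu<\tfrac{1}{4\kappa}$; your Erd\H{o}s--Tur\'an step, while cleaner, loses a little through the pointwise $h^{1/2}$ factor. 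Either way, the stated theorem follows.
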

For any $\kappa>1$,  by Roth's Theorem \cite{Roth1955} any irrational algebraic $\alpha\in \R$ is $\kappa$-Diophantine, and by Khinchin's theorem this is also true for almost all $\alpha\in \R$.   We thus get the following.
\begin{cor}
For $\xi\in \R^3$ with algebraic coefficients, and also for almost every $\xi\in \R^3$, for any $t\in \R$ we have $\omega_0(Q,\xi,t)\geq \frac{1}{8}$.
\end{cor}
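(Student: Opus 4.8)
The plan is to deduce the corollary formally from Theorem~\ref{t:main}: I would check that for $\xi$ in each of the two classes the Diophantine hypothesis of the theorem holds for \emph{every} $\kappa>1$, and then let $\kappa\downarrow 1$. (Throughout I take $\xi\notin\Q^3$, as in the ternary setting of the paper; for $\xi\in\Q^3$ the values $Q_\xi(v)$ lie in a fixed $\tfrac1m\Z$ so the statement is vacuous, and $\Q^3$ is Lebesgue-null in any case.) The deduction itself is immediate: fix $\kappa>1$ and $t\in\R$, and suppose coprime $a,c\in\Z$ are given with $\alpha a^2+\beta ac+\gamma c^2$ being $\kappa$-Diophantine. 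Then for every $\nu<\tfrac1{8\kappa}$ Theorem~\ref{t:main} yields $N_{Q,\xi,t}(T,T^{-\nu})\ge c\,T^{1/2-2\nu}\to\infty$, so the system $\|v\|\le T,\ |Q_\xi(v)-t|\le T^{-\nu}$ has integer solutions for all large $T$, and hence $\omega_0(Q,\xi,t)\ge\nu$. Since this holds for every $\nu<\tfrac1{8\kappa}$ we get $\omega_0(Q,\xi,t)\ge\tfrac1{8\kappa}$, and since $\kappa>1$ is arbitrary, $\omega_0(Q,\xi,t)\ge\tfrac18$, for every $t$.

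So the real task is to exhibit, for every $\kappa>1$, a coprime pair $(a,c)$ with $\alpha a^2+\beta ac+\gamma c^2$ being $\kappa$-Diophantine. For $\xi=(\alpha,\beta,\gamma)$ with algebraic coefficients, I would evaluate the binary form $f(a,c)=\alpha a^2+\beta ac+\gamma c^2$ at the coprime pairs $(1,0),(0,1),(1,1)$, obtaining $\alpha$, $\gamma$, and $\alpha+\beta+\gamma$: these cannot all be rational, for otherwise $\beta=(\alpha+\beta+\gamma)-\alpha-\gamma\in\Q$ and $\xi\in\Q^3$. Hence for one of these three coprime pairs the value $\eta:=f(a,c)$ is irrational, and being a $\Z$-linear combination of $\alpha,\beta,\gamma$ it is algebraic; by Roth's theorem \cite{Roth1955} it is then $\kappa$-Diophantine for every $\kappa>1$ --- and, importantly, with the \emph{same} pair $(a,c)$ for all $\kappa$. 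For almost every $\xi\in\R^3$ I would instead simply take $(a,c)=(1,0)$, so that $f(1,0)=\alpha$: the set of reals that fail to be $\kappa$-Diophantine for some $\kappa>1$ is the countable union over $n\ge1$ of the sets of reals that are not $(1+\tfrac1n)$-Diophantine, each Lebesgue-null by Khinchin's theorem, hence itself null; by Fubini the first coordinate of $\xi$ avoids this null set for a.e.\ $\xi\in\R^3$, so the pair $(1,0)$ works for all $\kappa>1$ on that full-measure set.

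I do not expect any genuine obstacle here, since all the analysis sits inside Theorem~\ref{t:main}; the one thing to be stated with care is the order of quantifiers --- the coprime pair $(a,c)$ must be chosen \emph{independently of} $\kappa$, so that on a single fixed $\xi$ the bound $\omega_0(Q,\xi,t)\ge\tfrac1{8\kappa}$ holds simultaneously for all $\kappa>1$ before the limit $\kappa\downarrow1$ is taken. Both constructions above have this feature (a single irrational algebraic value, resp.\ the fixed pair $(1,0)$), so this mild point causes no trouble.
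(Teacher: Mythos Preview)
Your proposal is correct and follows the same route as the paper, which disposes of the corollary in the single sentence preceding it by invoking Roth's theorem and Khinchin's theorem and then applying Theorem~\ref{t:main}. Your write-up is in fact more careful than the paper's: you explicitly pick a coprime pair $(a,c)\in\{(1,0),(0,1),(1,1)\}$ so that $\alpha a^2+\beta ac+\gamma c^2$ is an \emph{irrational} algebraic number (handling the case where $\alpha$ itself happens to be rational), a point the paper leaves implicit. One harmless remark: your worry about choosing $(a,c)$ independently of $\kappa$ is not actually needed, since $\omega_0(Q,\xi,t)$ does not depend on $(a,c)$ or $\kappa$ and it suffices to have $\omega_0\ge\tfrac{1}{8\kappa}$ for each $\kappa>1$ separately; and your parenthetical about $\xi\in\Q^3$ should say the conclusion \emph{fails} (for generic $t$) rather than is ``vacuous'', though the paper's standing hypothesis of irrational $\xi$ makes this moot.
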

\begin{rem}
While the main consequence is an explicit bound for $\omega_0(Q,\xi,t)$ for a specific $\xi$,  even the result for almost all $\xi\in \R^3$ is new.  While \cite{GhoshKelmerYu2023} showed that for any fixed $t\in \R$ one has $\omega_0(Q,\xi,t)\geq 1$ for a full measure set of $\xi\in\R^3$, that set depends on the target $t$. So it is not clear if there is a full measure set of $\xi\in \R^3$ for which $\omega_0(Q,\xi,t)\geq 1$ for all $t\in \R$. However, the bound  $\omega_0(Q,\xi,t)\geq 1/8$ we obtain here for almost all $\xi$ can probably be improved by using appropriate adaptation of the method of  \cite{GhoshKelmerYu2023} using similar ideas to \cite{GhoshKelmer2018}.
\end{rem}

\begin{rem}
We expect the lower bound we have for $N_{Q,\xi,s}(T,\delta)$ is far from optimal.
Indeed, by \cite[Theorem 10]{MargulisMohammadi11}, for a fixed $\delta>0$, a target $t\in \R$  and Diophantine  $\xi\in \R^3$ we have that
$N_{Q,\xi,t}(T,\delta)\gg T\delta$. However, the implied constant here depends ineffectively on $\delta$, so can not be applied when $\delta$ shrinks with $T$. 
For the case of homogenous indefinite ternary forms, recent new results of \cite{LindenstraussMohammadiWangYang23} give effective lower bounds for $N_{Q,0,t}(T,\delta)$ when $Q$ is irrational with some Diophantine conditions on its coefficients, that is valid when $\delta\geq T^{-\nu}$ for some computable small $\nu>0$. It is possible that similar methods could produce the correct lower bound on $N_{Q,\xi,s}(T,\delta)$ also in this setting at least when $\delta$ is not too small with respect to $T$.
 \end{rem}

We end with a few words on the  proof. Let $G=\SO_Q(\R)$ and $\Gamma=\SO_Q(\Z)$. As was observed in \cite{GhoshKelmerYu2023} the problem of effective density of integer values of $Q_\xi$ can be reduced to an appropriate statement on effective density of 
$\Gamma$ orbits of $\xi$ on the torus $\bT^3=\R^3/\Z^3$.  Moreover, effective density results for $\Gamma$-orbits of a point $\xi\in \bT^3$ are equivalent to appropriate effective density results for the $G$-orbit of a points of the form $(g,\xi) \in G\ltimes \R^3/\G\ltimes \Z^3$. Now, such effective density results of $G$-orbits on  $G\ltimes \R^3/\G\ltimes \Z^3$ would follow from an appropriate effective equidistribution result for long unipotent orbits and it is possible that the methods developed in \cite{LindenstraussMohammadiWang22} could be used to produce such results. Alternatively, noting that $\SO_Q(\R)\cong \SL_2(\R)$ the method developed in \cite{StrombergssonVishe2020} to establish effective equidistribution for certain unipotent orbits in  $\SL_2(\R)\ltimes (\R^2)^{\oplus k}/\SL_2(\Z)\ltimes (\Z^2)^{\oplus k}$ using Fourier analysis (as well as more recent work \cite{SodergrenStrombergssonVishe2024} also using the circle method to handle more general orbits), could also potentially be adapted to this setting to give effective density. Indeed, this was done for forms of signature $(2,2)$, though, extending their approach to apply to our setting seems much harder. Moreover, while such methods could potentially give an effectively computable lower bound for $\omega_0(Q,\xi,t)$, such bounds would be very small.

The approach we take is quite different. Instead of working with homogenous dynamics we try to prove an effective density result for the action of $\Gamma$ on the torus directly.
While analyzing the full $\Gamma$ orbit in $\bT^3$ seems complicated, it turns out that to show density of integer values of $Q_\xi$ it is enough to consider a one parameter discrete unipotent subgroup inside $\Gamma$ (which exists precisely when $Q$ is isotropic). This action fixes one of the coordinates and gives an action on $\bT^2$. We can thus reduce the problem to proving effective density of a certain (quadratic) polynomial map on the torus. Such effective density results then follow from an effective version of Weyl's equidistribution theorem.

\section{Effective density  on the torus}
Fix $\alpha,\beta,\gamma\in \R$ and consider the map $\phi: \N\to \bT^2$ given by
\begin{equation}\label{e:phi}
\phi(m)=(2\alpha m+\beta,\alpha m^2+\beta m+\gamma) \pmod{1}
\end{equation}
Weyl's equidistribution theorem implies that as long as $\alpha$ is irrational the points $\{\phi(m)\}_{m\in \N}$ become equidistributed on $\bT^2$. Following the same argument we now prove an effective density result depending explicitly on the Diophantine properties of $\alpha$.  

We first introduce some notation. We denote by $\|\cdot\|$ the Euclidean norm  in $\R^n$ and by $\|\cdot\|_{\bT^n}$ the corresponding norm on the torus, that is 
$\|x\|_{\bT^n}=\min\{ \|x+v\|: v\in\Z^n\}$. For doing harmonic analysis on the torus we will use the notation $e(x)=e^{2\pi ix}$. 
We will use the notation $X\ll Y$ as well as  $X=O(Y)$ to mean that there is a constant $c>0$ such that $X\leq cY$ and we denote $X\asymp Y$ if $ X\ll Y\ll X$.
If we wish to emphasize the dependence of the implied constant on additional parameters we will indicate this as a subscript. 

The main result of this section is the following.
\begin{prop}\label{p:TorusDensity}
Given $(\alpha,\beta,\gamma)\in \R^3$ let $\phi:\N\to\bT^2$ be as in \eqref{e:phi}.
For any $v_0\in \bT^2,\; T\geq 1$ and small $\delta>0$ consider the counting function
 $$N_{\phi}(T,\delta)=\#\{1\leq m\leq T: \|\phi(m)-v_0\|_{\bT^2}\leq \delta\}.$$
Assume that $\alpha$ is $\kappa$-Diophantine and let $\nu<\frac{1}{4\kappa}$. Then there is some $T_0\geq 1$ (depending on $\nu$ and $\kappa$) such that for all $T\geq T_0$ and any  $T^{-\nu}\leq \delta<1$,  for any $v_0\in \bT^2$
$$N_{\phi}(T,\delta)\asymp T\delta^2.$$
\end{prop}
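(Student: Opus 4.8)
The plan is to prove an effective version of Weyl's equidistribution theorem for the sequence $\{\phi(m)\}_{1\le m\le T}$ by the classical exponential‑sum and Weyl‑differencing method, keeping every constant explicit in $\kappa$. \textbf{Step 1 (reduction to exponential sums).} Since a Euclidean ball of radius $\delta$ in $\bT^2$ is squeezed between the boxes $v_0+[-\delta/2,\delta/2]^2$ and $v_0+[-\delta,\delta]^2$, it suffices to count $1\le m\le T$ with $\phi(m)$ in a box $v_0+[-\eta,\eta]^2$, $\eta\asymp\delta$, and to show the count is $\asymp T\delta^2$. Fix a parameter $L$ with $L\ge C_0/\delta$ (we will take $L\asymp\delta^{-1}$) and let $F_\pm$ be trigonometric polynomials on $\bT^2$ of degree $\le L$ in each variable with $F_-\le\mathbf 1_{v_0+[-\eta,\eta]^2}\le F_+$, built as tensor products of the one–dimensional Beurling--Selberg majorant and minorant; for the minorant one uses the standard corrected tensor $M_1\otimes M_2-(M_1-m_1)\otimes M_2-M_1\otimes(M_2-m_2)$, which is a trigonometric polynomial of the same degree lying below the box indicator. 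These satisfy $\widehat{F_\pm}(0)\asymp\delta^2$ (here $L\gtrsim\delta^{-1}$ is used) and $|\widehat{F_\pm}(k_1,k_2)|\ll\prod_{i=1}^{2}\bigl(\min(\delta,|k_i|^{-1})+L^{-1}\bigr)$. Expanding in Fourier series,
\[
\sum_{m\le T}F_\pm(\phi(m)-v_0)=\lfloor T\rfloor\,\widehat{F_\pm}(0)+\sum_{0\ne k,\ |k_1|,|k_2|\le L}\widehat{F_\pm}(k)\,e(-k\!\cdot\!v_0)\,S_k(T),\qquad S_k(T)=\sum_{m\le T}e(k\!\cdot\!\phi(m)),
\]
and since the main term is $\asymp T\delta^2$ it is enough to bound the error by, say, $\tfrac12T\delta^2$.

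\textbf{Step 2 (bounding $S_k(T)$).} From $\phi(m)=(2\alpha m+\beta,\alpha m^2+\beta m+\gamma)$ one has $k\!\cdot\!\phi(m)=k_2\alpha m^2+(2k_1\alpha+k_2\beta)m+(k_1\beta+k_2\gamma)$, so $S_k(T)$ is a Weyl sum of degree $\le2$ with leading coefficient $k_2\alpha$. If $k_2=0$ (hence $k_1\ne0$) it is a geometric sum, $|S_k(T)|\le\tfrac12\|2k_1\alpha\|_{\bT^1}^{-1}$, and the $\kappa$–Diophantine hypothesis gives $\|2k_1\alpha\|_{\bT^1}\gg_\alpha|k_1|^{-\kappa}$ (and, when summed against $\widehat{F_\pm}$, the sharper mean bound $\sum_{|k_1|\le K}\|2k_1\alpha\|_{\bT^1}^{-1}\ll_\alpha K^{\kappa}\log K$). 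If $k_2\ne0$ one applies Weyl differencing: for $H\le T$,
\[
|S_k(T)|^2\ll\frac{T}{H}\Bigl(T+\sum_{1\le h<H}\min\bigl(T,\ \|2k_2\alpha h\|_{\bT^1}^{-1}\bigr)\Bigr),
\]
with $\|2k_2\alpha h\|_{\bT^1}\gg_\alpha(|k_2|h)^{-\kappa}$; estimating the remaining sum via a Dirichlet approximation $a/q$ to $2k_2\alpha$, whose denominator is forced by the Diophantine inequality to satisfy $q\gg_\alpha T^{1/\kappa}/|k_2|$, and taking $H\asymp T$, yields (up to logarithmic factors) $|S_k(T)|\ll_\alpha T^{1-\frac1{2\kappa}}|k_2|^{1/2}$, uniformly in $k_1$.

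\textbf{Step 3 (choice of parameters).} Feeding Step 2 into the error sum of Step 1 and using the Fourier‑coefficient bound, the error is dominated by an expression polynomial in $\delta^{-1}$, $L$ and $T^{1-\frac1{2\kappa}}$; with $L\asymp\delta^{-1}$ and $\delta\ge T^{-\nu}$, one checks that each term is $\ll T\delta^2$ once $\nu<\tfrac1{4\kappa}$, provided $T\ge T_0(\nu,\kappa)$ so that the logarithmic factors are absorbed (the fact that the smoothing bandwidth can be taken as small as $L\asymp\delta^{-1}$, rather than $\delta^{-2}$ as a naïve discrepancy bound would require, is essential for the range to be this large). Using $F_-$ then gives $N_\phi(T,\delta)\gg T\delta^2$ and $F_+$ gives $N_\phi(T,\delta)\ll T\delta^2$, because $\widehat{F_\pm}(0)\asymp\delta^2$; this is the proposition.

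\textbf{Main obstacle.} The crux is Step 2 together with the balancing in Step 3: the only cancellation available in the quadratic Weyl sum has relative size $T^{-1/(2\kappa)}$, which deteriorates as $\kappa$ grows, whereas the Fourier side unavoidably loses a positive power of $\delta^{-1}$ (from summing over the $\asymp\delta^{-1}$ relevant frequencies and from the requirement $L\gtrsim\delta^{-1}$); making the two effects compatible so as to reach the exponent $\tfrac1{4\kappa}$ is the delicate point, and it forces one to use the Diophantine inequality efficiently — with the optimal differencing length in the quadratic case (exploiting that $\phi_1$ is the derivative of $\phi_2$, so that the constraint coming from the first coordinate already restricts the admissible differences) and with the sharp mean‑value bound for $\sum\|2k\alpha\|^{-1}$ in the linear case. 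A secondary, purely technical, point is constructing a genuine trigonometric minorant for a two–dimensional box of small side length with Fourier coefficients obeying the stated bound.
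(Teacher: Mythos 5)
Your overall strategy (harmonic analysis on $\bT^2$, Weyl differencing for the quadratic sum, Dirichlet approximation plus the Diophantine hypothesis to control the resulting $\min$-sum) is the same as the paper's; the paper uses a single smooth bump with rapid Fourier decay where you use a Beurling--Selberg majorant/minorant pair with a hard frequency cutoff $L\asymp\delta^{-1}$, but that is a cosmetic difference (and in fact your choice makes the upper bound $N_\phi\ll T\delta^2$ cleaner). However, there is a genuine quantitative gap in how you pass from Step~2 to Step~3, and it makes the claimed exponent $\nu<\tfrac{1}{4\kappa}$ unreachable by your argument.

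The pointwise estimate you prove in Step~2, $|S_k(T)|\ll |k_2|^{1/2}T^{1-1/(2\kappa)}$ (plus a harmless $T^{1/2}\log^{1/2}T$), is correct but too weak when simply summed over all frequencies. Since $|\widehat{F_\pm}(k)|\ll\delta^2$ uniformly for $|k_1|,|k_2|\le L\asymp\delta^{-1}$, the $k_2\ne 0$ contribution to the error is
\[
\ll\ \delta^2\sum_{|k_1|\le L}\ \sum_{1\le|k_2|\le L}|k_2|^{1/2}T^{1-1/(2\kappa)}\ \ll\ \delta^2\cdot L^{5/2}\,T^{1-1/(2\kappa)}\ \asymp\ \delta^{-1/2}T^{1-1/(2\kappa)},
\]
and requiring this to be $\ll T\delta^2$ forces $\delta\gg T^{-1/(5\kappa)}$, i.e.\ $\nu<\tfrac{1}{5\kappa}$, not $\tfrac{1}{4\kappa}$. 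Your Step~3 claim that ``each term is $\ll T\delta^2$ once $\nu<\tfrac{1}{4\kappa}$'' does not follow from the bound you established. The remark in your ``Main obstacle'' paragraph gestures at the need to use the Diophantine input more efficiently, but the concrete device is missing.

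What the paper does differently --- and what you would need to add --- is to estimate the sum $\sum_{1\le n_2\le L}|S_T(n_2,\alpha,\tilde\beta)|$ \emph{in mean} rather than termwise. Apply Cauchy--Schwarz to pull out a factor $L^{1/2}$, then bound $\sum_{n_2\le L}|S_T(n_2)|^2$ via Weyl differencing as a double sum $\sum_{n_2\le L}\sum_{m\le T}\min\bigl(\|2n_2m\alpha\|^{-1},T\bigr)$, and aggregate the variable $a=n_2m\le LT$: the number of representations is $\le\sigma(a)\ll_\epsilon a^\epsilon$, so the double sum collapses to $\ll_\epsilon(LT)^\epsilon\sum_{a\le LT}\min(\|2a\alpha\|^{-1},T)\ll_\epsilon (LT)^\epsilon\cdot L\,T^{2-1/\kappa}$ by the one--variable spacing lemma. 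This yields $\sum_{n_2\le L}|S_T(n_2)|\ll_\epsilon L^{1+\epsilon}T^{1-1/(2\kappa)+\epsilon}$, a saving of $L^{1/2}\asymp\delta^{-1/2}$ over the trivial summation of the pointwise bound, and it is exactly this saving that promotes the exponent from $\tfrac{1}{5\kappa}$ to $\tfrac{1}{4\kappa}$. Incorporating this Cauchy--Schwarz plus divisor-function aggregation in your Step~3 would close the gap; the rest of your outline is sound.
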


For the proof we will need the following two standard estimates. While these results are not new and the arguments are standard we will include a short proof for the sake of completeness.
The first estimate is Weyl's differencing trick.
\begin{lem}\label{l:ST}
For $\alpha,\beta \in \R$ and $n\in \N$  let 
$$S_T(n,\alpha,\beta)=\sum_{m=1}^T e(n\alpha m^2+\beta m).$$
Then 
$$|S_T(n,\alpha,\beta)|^2\leq T+2\sum_{m=1}^T\min\{ \frac{1}{\|2nm\alpha\|_\bT},T\}.$$
\end{lem}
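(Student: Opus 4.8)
\textbf{Proof proposal for Lemma \ref{l:ST}.}
The plan is the classical Weyl differencing computation. First I would write
$$|S_T(n,\alpha,\beta)|^2=\sum_{m_1=1}^T\sum_{m_2=1}^T e\big(n\alpha(m_1^2-m_2^2)+\beta(m_1-m_2)\big),$$
and substitute $m_1=m_2+h$. Using $m_1^2-m_2^2=2hm_2+h^2$ and $m_1-m_2=h$, the double sum becomes
$$\sum_{|h|\le T-1} e(n\alpha h^2+\beta h)\sum_{m_2\in I_h} e(2n\alpha h\,m_2),$$
where $I_h=\{1,\dots,T\}\cap\{1-h,\dots,T-h\}$ is an interval of at most $T$ consecutive integers. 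Taking absolute values, the outer phase $e(n\alpha h^2+\beta h)$ drops out and we are left with $|S_T|^2\le\sum_{|h|\le T-1}\big|\sum_{m_2\in I_h}e(2n\alpha h\,m_2)\big|$.

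Next I would estimate the inner geometric sum. For $h=0$ it is exactly $|I_0|\le T$, contributing the leading term $T$. For $h\neq 0$, the standard bound for a geometric progression gives $\big|\sum_{m\in I_h}e(\theta m)\big|\le\min\{|I_h|,\tfrac1{2\|\theta\|_\bT}\}\le\min\{T,\tfrac1{\|2n\alpha h\|_\bT}\}$ with $\theta=2n\alpha h$; here the trivial bound $|I_h|\le T$ also covers the degenerate case $2n\alpha h\in\Z$ where $\|\theta\|_\bT=0$. Since $\|{-x}\|_\bT=\|x\|_\bT$, the contributions of $h$ and $-h$ coincide, so
$$|S_T(n,\alpha,\beta)|^2\le T+2\sum_{h=1}^{T-1}\min\Big\{T,\frac{1}{\|2n\alpha h\|_\bT}\Big\}\le T+2\sum_{m=1}^{T}\min\Big\{\frac{1}{\|2nm\alpha\|_\bT},T\Big\},$$
which is the claimed inequality.

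There is no real obstacle here: the only points requiring a little care are bookkeeping the truncated ranges $I_h$ after the shift (so that each inner sum genuinely has length $\le T$), folding the $h\leftrightarrow-h$ symmetry to produce the factor $2$, and recording the geometric-sum estimate $|\sum_{m=M+1}^{M+N}e(m\theta)|\le\min\{N,\tfrac1{2\|\theta\|_\bT}\}$, which follows from $|\sin(\pi\theta)|\ge 2\|\theta\|_\bT$. I would include these as one or two lines for completeness, exactly as flagged before the lemma.
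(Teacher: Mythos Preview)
Your proof is correct and follows essentially the same Weyl differencing argument as the paper: expand $|S_T|^2$ as a double sum, shift by $h=m_1-m_2$, separate the diagonal $h=0$, and bound the remaining inner geometric sums by $\min\{T,1/\|2n\alpha h\|_\bT\}$ before folding $h\leftrightarrow -h$. The only cosmetic difference is that the paper writes the off-diagonal as $2\Re(\sum_{m_1<m_2}\cdots)$ rather than summing over $|h|\le T-1$ and invoking the symmetry explicitly.
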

\begin{proof}
Squaring $S_T(n,\alpha,\beta)$ the diagonal terms give $T$ and we can bound the non diagonal terms as follows.
\begin{eqnarray*}
|S_T(n,\alpha,\beta)|^2 &= & T+2\Re(\sum_{ m_1< m_2\leq T}e(n\alpha (m_1^2-m_2^2)+\beta (m_1-m_2))|)\\
&\leq  & T+2|\sum_{1\leq m_1\leq T}\sum_{h=1}^{T-m_1}e(-n\alpha 2m_1h -nh^2 -\beta h)|\\
&\leq & T+2\sum_{h=1}^{T}|\sum_{m_1=1}^{T-h}e(-2nh\alpha m_1)|\\
&\leq & T+2\sum_{h=1}^T \min\{\frac{1}{\|2nh\alpha\|_\bT},T)
\end{eqnarray*}
\end{proof}
The second estimate  is the following.
\begin{lem}\label{l:sumfrac}
Let $\alpha\in \R$  be $\kappa$-Diophantine for some $\kappa\geq 1$.  Then for any $T\geq 1$ and $M\geq 1$
$$\sum_{m=1}^{MT} \min\{\frac{1}{\|m\alpha\|_{\bT}}, T) \ll  MT^{2-1/\kappa}+MT\log(T).$$
\end{lem}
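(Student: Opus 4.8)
The plan is to reduce everything to a single spacing estimate coming directly from the $\kappa$-Diophantine hypothesis, and then to sum dyadically. Write the hypothesis as $\|q\alpha\|_\bT\ge c\,q^{-\kappa}$ for all $q\in\N$ (with $\kappa\ge 1$). First I would prove the auxiliary claim: for every interval $I\subset\R$ of length $|I|=L$ and every $\epsilon>0$,
\[
\#\{m\in I\cap\Z:\ \|m\alpha\|_\bT<\epsilon\}\ \ll_{\alpha}\ 1+L\,\epsilon^{1/\kappa}.
\]
Indeed, if $m_1<m_2$ both lie in this set, then $q:=m_2-m_1$ satisfies $1\le q\le L$ and, by the triangle inequality on $\bT$, $\|q\alpha\|_\bT<2\epsilon$; comparing with $\|q\alpha\|_\bT\ge c\,q^{-\kappa}$ forces $q>(c/2\epsilon)^{1/\kappa}$. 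Thus consecutive elements of the set are spaced by more than $(c/2\epsilon)^{1/\kappa}$, and the count follows.

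Next I would decompose $\sum_{m=1}^{MT}\min\{\|m\alpha\|_\bT^{-1},T\}$ according to the dyadic size of $\|m\alpha\|_\bT$. Put $K=\lceil\log_2 T\rceil$. For $1\le k<K$, each $m\le MT$ with $\|m\alpha\|_\bT\in[2^{-k},2^{-k+1})$ contributes at most $2^k$, and by the claim (applied with $I=[1,MT]$, $\epsilon=2^{-k+1}$) there are $\ll_\alpha 1+MT\,2^{-k/\kappa}$ of them; the remaining $m$, namely those with $\|m\alpha\|_\bT<2^{-K+1}\le 2/T$, each contribute at most $T$ and number $\ll_\alpha 1+MT\,T^{-1/\kappa}$. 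Adding these up,
\[
\sum_{m=1}^{MT}\min\Big\{\frac{1}{\|m\alpha\|_\bT},\,T\Big\}\ \ll_\alpha\ T\;+\;MT\sum_{k=1}^{K}2^{k(1-1/\kappa)}\;+\;MT\cdot T^{1-1/\kappa}.
\]
Here $T\ll MT^{2-1/\kappa}$ and $MT\cdot T^{1-1/\kappa}=MT^{2-1/\kappa}$, while the inner sum is $\ll_\kappa T^{1-1/\kappa}$ if $\kappa>1$ and equals $K\asymp\log T$ if $\kappa=1$; in either case multiplying by $MT$ gives a contribution $\ll_{\alpha}MT^{2-1/\kappa}+MT\log T$, which is the asserted bound (the case $T<2$ being trivial, since then the left side is $\le 2MT\ll M$).

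I do not expect a genuine obstacle; this is a standard estimate and the argument is essentially forced once the spacing claim is in hand. The two points that need mild care are: (i) the contribution of the very small values $\|m\alpha\|_\bT<2^{-K+1}$ must be controlled by a \emph{single} application of the spacing claim rather than by summing the ``$1$'' in $1+L\epsilon^{1/\kappa}$ over all dyadic scales on which such $m$ can occur (there are about $\kappa\log_2(MT)$ of them, which would overwhelm the bound); and (ii) the boundary case $\kappa=1$, where the geometric series degenerates into a harmonic sum and is precisely the source of the $MT\log T$ term. If preferred, one may replace the dyadic split by the layer-cake identity $\min\{\|m\alpha\|_\bT^{-1},T\}=\int_0^T\mathbf 1[\|m\alpha\|_\bT<1/s]\,\rd s$ followed by integrating the spacing bound in $s$, or split $\{1,\dots,MT\}$ into $M$ consecutive blocks of length $T$ and apply the estimate on each block; all of these routes produce the same two terms.
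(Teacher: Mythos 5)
Your proof is correct, and it takes a genuinely different route from the paper's. The paper follows the classical Weyl/Vinogradov template: it first invokes Dirichlet's approximation theorem to produce a rational $p/q$ with $|\alpha-p/q|\le 1/(Tq)$ and $1\le q\le T$, then observes that $m\alpha\pmod 1$ is $\frac{1}{2q}$-separated on any block of $\le q/2$ consecutive integers, splits $\{1,\dots,MT\}$ into $\approx MT/q$ such blocks, bounds the sum on each block by $T+2q\log T$, and only at the very last step uses the $\kappa$-Diophantine hypothesis to force $q\gg T^{1/\kappa}$. Your argument instead converts the Diophantine hypothesis directly into a spacing bound — if $\|m_1\alpha\|_\bT,\|m_2\alpha\|_\bT<\epsilon$ then $|m_1-m_2|\gg\epsilon^{-1/\kappa}$ — and then sums dyadically over the size of $\|m\alpha\|_\bT$, handling the near-resonant tail $\|m\alpha\|_\bT\lesssim 1/T$ by a single application of the spacing bound. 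Both yield the same two terms, with the $MT\log T$ term arising at $\kappa=1$ from the degenerate geometric series in your version and from the harmonic sum $\sum 2q/(j-1)$ in the paper's. The trade-off is that the paper's route via Dirichlet cleanly separates the ``for all $\alpha$'' Weyl-type estimate (in terms of $q$) from the Diophantine input, which is useful if one wants bounds for non-Diophantine $\alpha$; your route is more direct and avoids introducing the auxiliary rational approximant. One small point of hygiene you already flagged correctly: the tail $\|m\alpha\|_\bT<2^{-K+1}$ must be disposed of in one shot (or cut off, since the summand is capped at $T$), rather than continuing the dyadic sum over the roughly $\kappa\log(MT)$ deeper scales.
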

\begin{proof}
By Dirichlet approximation theorem for any $T\geq 1$  there are $p,q\in \Z$ with $1\leq q\leq T$ such that $|\alpha-\frac{p}{q}|\leq \frac{1}{Tq}$.
Then for any $i\neq j$ we have that $\|(i-j)\alpha\|_{\bT}\geq \|(i-j)\frac{p}{q}\|_{\bT}-\frac{|i-j|}{qT}$. Now if we further assume $|i-j|\leq \frac{q}{2}$ then  $ip\not\equiv jp\pmod{q}$ so $\|(i-j)\frac{p}{q}\|_{\bT}\geq \frac{1}{q}$ and $\frac{|i-j|}{qT}\leq \frac{1}{2q}$, hence $\|(i-j)\alpha\|_{\bT}\geq \frac{1}{2q}$. 

Now, we split the interval $1\leq m\leq MT$ into $[\frac{2MT}{q}]+1$ intervals each of lengths $\leq q/2$.
Let $I$ be one such interval. For each $m\in I$ we may pick a representative of $m\alpha\pmod{1}$ in $[-\frac{1}{2},\frac{1}{2}]$ so that $\|m\alpha\|_{\bT}$ is just the absolute value of that representative. Let $0<\alpha_1<\alpha_2<\ldots<\alpha_n<\frac{1}{2}$ denote the positive representatives (we can deal with negative representatives similarly).  Recall that for any $1\leq i<j\leq n$ we have $\alpha_j-\alpha_i\geq \frac{1}{2q}$, so $\alpha_j\geq \frac{j-1}{2q}$ for $j\geq 2$. We can thus bound, recalling that $n\leq \frac{q}{2}\leq  T$,
$$\sum_{j=1}^n \min\{T,\frac{1}{\alpha_j}\}\leq  T+\sum_{j=2}^n \frac{2q}{j-1} \leq T+2q\log(T).$$
We can similarly bound the contribution of the negative representatives in the same way, and adding up the contribution of the  $[\frac{2MT}{q}]+1$ intervals we get that
$$\sum_{m=1}^{MT} \min\{\frac{1}{\|m\alpha\|_{\bT}}, T) \leq \frac{4MT^2}{q}+8(M+1)T\log(T).$$
Finally, the Diophantine condition on $\alpha$ and the estimate $|\alpha-\frac{p}{q}|\leq \frac{1}{Tq}$ implies that  there is $C>0$ such that $q\geq CT^{1/\kappa}$ concluding the proof.

\end{proof}

We can now give the following
\begin{proof}[Proof of \propref{p:TorusDensity}]
Let $f\in C^\infty([-1/2,1/2])$ be nonnegative  bounded, having mean $1$ and vanishing outside of $[-1/4,1/4]$.
 Extend $f$  periodically to a function on $\R/\Z$ (that by a slight abuse of notation we still denote by $f$) and let $F(x,y)=f(x)f(y)$.  For $v_0\in \bT^2$ and $\delta>0$ let 
$F_\delta(v)=F(\frac{v-v_0}{\delta})$ so that 
$$N_{\phi}(T,\delta)\asymp \sum_{m=1}^T F_\delta(\phi(m)).$$
Now expand $F_\delta$ to its Fourier expansion noting that $\hat{F}_\delta(n)=\delta^2\hat{F}(\delta n)e(v_0\cdot n)$ for $n\in \Z^2$. Hence
\begin{eqnarray*}
N_{\phi}(T,\delta)&\asymp& T\delta^2(1+\frac{1}{T}\sum_{n\neq (0,0)}e(v_0\cdot n) \hat{F}(\delta n)\sum_{m=1}^T e(n_2\alpha m^2+(n_2 \beta+2n_1\alpha)m +n_2\gamma+n_1\beta)\\
&=& T\delta^2(1+\cE_1(T,\delta)+\cE_2(T,\delta))
\end{eqnarray*}
where $\cE_1$ is contribution of terms with $n_2=0$ and $\cE_2$ accounts for all other terms.
We first bound $\cE_1$. Collecting together terms with $\ell \leq |\delta n_1|\leq \ell+1$ and using the fast decay of Fourier coefficients, $|\hat{f}(t)| \ll  \frac{1}{t^{4}}$ for $|t|\geq 1$, we can bound  
\begin{eqnarray*}
|\cE_1(T,\delta)|&\leq & \frac{1}{T}\sum_{n_1\neq 0}|\hat{f}(\delta n_1)||\sum_{m=1}^T e(2n_1\alpha m+n_1\beta)|\\
&\ll & \frac{1}{T}\sum_{\ell=1}^\infty \frac{1}{\ell^4}\sum_{\ell-1\leq |n_1\delta|\leq \ell}|\sum_{m=1}^T e(2n_1\alpha m)|\\
&\ll& \frac{1}{T}\sum_{\ell=1}^\infty \frac{1}{\ell^4}\sum_{ |n_1|\leq \ell/\delta }\min\{\frac{1}{\|2n_1\alpha\|_{\bT}},T) 
\end{eqnarray*}
We can use \lemref{l:sumfrac} with $M=\frac{\ell}{\delta T}$ to bound the last inner sum by $O(\frac{\ell}{\delta} T^{1-1/\kappa}+\frac{\ell}{\delta}\log(T))$ and since the sum over $\ell$ absolutely converges  we can bound 
$$|\cE_1(T,\delta)|\ll \frac{1}{\delta T^{1/\kappa}}+\frac{\log(T)}{\delta T}\leq \frac{\log(T)}{T^{3/4\kappa}}$$  
where we used that $\delta>T^{-\nu}$ with $\nu<\frac{1}{4\kappa}$.

Next we bound the second term by
\begin{eqnarray*}
|\cE_2(T,\delta)|&\leq & \frac{1}{T}\sum_{n_1}|\hat{f}(\delta n_1)|\sum_{n_2\neq 0} |\hat{f}(\delta n_2)||\sum_{m=1}^T e(n_2\alpha m^2+(n_2 \beta+2n_1\alpha)m)|\\
&=&\frac{1}{T}\sum_{n_1}|\hat{f}(\delta n_1)|\sum_{n_2\neq 0} |\hat{f}(\delta n_2)|S_T(n_2,\alpha,\tilde\beta)|\\
\end{eqnarray*}
where $\tilde\beta=n_2\beta+2n_1\alpha$. For the sum over $n_2$ again collect together terms with $\ell\leq |\delta n_2|\leq \ell+1$, to bound 
$$|\sum_{n_2\neq 0} |\hat{f}(\delta n_2)|S_T(n_2,\alpha,\tilde\beta)|\ll\sum_{\ell=1}^\infty  \ell^{-4}\sum_{ n_2=1}^{\ell/\delta}|S_T(n_2,\alpha,\tilde\beta)|.$$
Using Cauchy-Schwarz we can bound the most inner sum  by
$$ \sum_{n_2\leq \frac{\ell}{\delta}}|S_T(n_2,\alpha,\tilde\beta)|\leq (\ell/\delta)^{1/2}(\sum_{n\leq \ell/\delta}|S_T(n_2,\alpha,\tilde\beta)|^2)^{1/2},$$
and by \lemref{l:ST} 
\begin{eqnarray*}
\sum_{n_2\leq \ell/\delta}|S_T(n_2,\alpha,\tilde\beta)|^2&\ll& \sum_{n_2\leq \ell/\delta}(T+\sum_{m=1}^T \min\{\frac{1}{\|2n_2m\alpha\|_{\bT}},T\}).
\end{eqnarray*}
The first term is $O(\frac{\ell T}{\delta})$.  For the second term, writing $a=n_2m$ and $\sigma(a)=\sum_{d|a}1$, noting that $\sigma(a)\ll_\epsilon a^\epsilon$ we can bound
\begin{eqnarray*}\sum_{n_2\leq \ell/\delta}\sum_{m=1}^T \min\{\frac{1}{\|2n_2m\alpha\|_{\bT}},T)&\leq &\sum_{a\leq \frac{2\ell T}{\delta}} \sigma(a)\min\{\frac{1}{\|2a\alpha\|_{\bT}},T\}\\
&\ll_\epsilon & ( \frac{\ell T}{\delta})^\epsilon\sum_{a\leq \frac{2\ell T}{\delta}}\min\{\frac{1}{\|2a\alpha\|_{\bT}},T\}\\
&\ll_\epsilon & (\frac{\ell}{\delta})^{1+\epsilon} T^{2-1/\kappa+\epsilon}
\end{eqnarray*}
where we used \lemref{l:sumfrac} with $M=\frac{2\ell}{\delta}$  to bound the last sum.

We thus get that
$$\sum_{ n_2=1}^{\ell/\delta}|S_T(n_2,\alpha,\tilde\beta)|\ll_\epsilon (\frac{\ell}{\delta})^{1+\epsilon} T^{1-1/2\kappa+\epsilon},$$
and plugging this back, noting that the sum over $\ell$ absolutely converges we get that 
$$\sum_{n_2\neq 0} |\hat{f}(\delta n_2)|S_T(n_2,\alpha,\tilde\beta)|\ll  (\frac{1}{\delta})^{1+\epsilon} T^{1-1/2\kappa+\epsilon}.$$
Next for the sum over $n_1\in \Z$ we note that $\sum_{n_1}|\hat{f}(\delta n_1)|\ll \frac{1}{\delta}$ to get that for $\delta\geq T^{-\nu}$
$$|\cE_2(T,\delta)|\ll (\frac{1}{\delta T^{1/4\kappa}})^{2+\epsilon}\leq T^{-(\frac{1}{4\kappa}-\nu)(2+\epsilon)}\ .$$
Since we assume $\nu<\frac{1}{4\kappa}$ we have that $\cE_2(T,\delta)\to 0$ as $T\to \infty$ and hence for all  $T$ sufficiently large we can bound $|\cE_1(T,\delta)+\cE_2(T,\delta)|<\frac{1}{2}$ and conclude that 
$N_{\phi}(T,\delta)\asymp T\delta^2$ as claimed. 
\end{proof}

\section{Proof of \thmref{t:main}}
For  $Q$ as in \eqref{e:Q} let $\iota: \SL_2(\R)\to\SO_Q(\R)$
denote the standard homomorphism given by 
$$\iota(\begin{pmatrix} a & b\\c &d\end{pmatrix}=\begin{pmatrix}a^2 & 2ab & b^2\\ ac & ad+bc& bd\\ c^2 & 2cd & d^2\end{pmatrix},$$
acting from the right on row vectors $(\alpha,\beta,\gamma)\in \R^3$.
For any $m\in \N$ let  
$$M_m=\iota(\begin{pmatrix} 1 & m\\ 0 &1\end{pmatrix})= \begin{pmatrix} 1 & 2m & m^2\\ 0 & 1& m\\ 0 & 0 & 1\end{pmatrix}\in \SO_Q(\Z),$$
and note that
$$(\alpha,\beta,\gamma )M_m= (\alpha, 2\alpha m+\beta, \alpha m^2+\beta m+\gamma)=(\alpha,\phi(m)),$$
with $\phi$ given in \eqref{e:phi}.
We can now give the following.
\begin{proof}[Proof of Theorem \ref{t:main}]
If needed we may replace  $\xi=(\alpha,\beta,\gamma)$ with  $\tilde{\xi}=\xi M=(\tilde\alpha,\tilde\beta,\tilde\gamma)$ for some fixed $M\in \SO_Q(\Z)$ so that $\tilde{\alpha}=a^2\alpha+ac\beta+c^2\gamma $ is $\kappa$-Diophantine. Since $Q_\xi(v)=Q_{\tilde \xi}(v M)$ we may without loss of generality assume that $\alpha$ is $\kappa$-Diophantine. 

Now, given a target $t\in \R$ we can find $(y,z)\in \R^2$ such that $Q(\alpha,y,z)=y^2-4\alpha z=t$ and let $\eta=(\alpha,y,z)$.  Let $\nu=2\omega<\frac{1}{4\kappa}$, then by \propref{p:TorusDensity}  for all sufficiently large $T$ and for any $\delta>(\sqrt{T})^{-\nu}=T^{-\omega}$
 $$\#\{1\leq m\leq \sqrt{T}: \|\phi(m)-(y,z)\|_{\bT^2}\leq \delta \}\asymp \sqrt{T} \delta^2.$$
For any $m$ in this set, there is some $u_m=(0,a_m,b_m)\in \Z^3$ such that  
$$\|\xi M_m+u_m-\eta\|\leq \delta,$$
and we denote by $v_m=u_mM_m^{-1}$. Noting that $Q(vM_m)=Q(v)$ we can estimate 
$$|Q_\xi(v_m)-s|=|Q(\xi+v_m)-Q(\eta)|=|Q(\xi M_m+u_m)-Q(\eta)|\ll_{y,z} \delta.$$ 

We can now bound the size of $v_m$ as follows. Noting that $\|\xi M_m+u_m-\eta\|\leq \delta$ and that $\|M_m\|\ll m^2\leq T$ we see that 
$\|\xi +v_m-\eta M_m^{-1}\| \ll \delta T$ and hence 
$$\|v_m\|\ll \|\eta M_m^{-1}\|+\delta T\ll T.$$

Finally note that if $m\neq m'$ then $u_m\neq u_{m'}$ (since otherwise  $\|\xi M_m-\xi M_{m'}\|\leq 2\delta$ which for $\delta$ sufficiently small  can only happen when $m=m'$). Since $u_m=(0,a_m,b_m)$ then $v_m=u_m M_m^{-1}=(0,a_m,b_m-ma_m)$, and the condition $u_m\neq u_{m'}$ implies that $v_m\neq v_{m'}$. 

Now, replacing $\delta$ and $T$ by a constant multiple  we see that there is some $c>0$ so that the set
$$\{v_m: 1\leq m\leq c\sqrt{T}: \|\phi(m)-(y,z)\|_{\bT^2}\leq c\delta\}$$ 
is a set containing $\asymp T\delta^2$ distinct vectors of size $\|v_m\|\leq T$ with $|Q_\xi(v)-s|\leq \delta,$
thus concluding the proof.
\end{proof}

%
%

\end{document}